\newtheorem{thm}{Theorem}
\newtheorem{result}{Result}
\newtheorem{remark}{Remark}
\newtheorem{cor}{Corollary}
\newcommand{\blind}{0}
\begin{document}

\def\spacingset#1{\renewcommand{\baselinestretch}%
{#1}\small\normalsize} \spacingset{1}

%%%%%%%%%%%%%%%%%%%%%%%%%%%%%%%%%%%%%%%%%%%%%%%%%%%%%%%%%%%%%%%%%%%%%%%%%%%%%%

\if0\blind
{
  \title{\bf On the distribution of winners' scores in a round-robin tournament}

  \author{
  Yaakov Malinovsky
    \thanks{email: yaakovm@umbc.edu}
   \\
    Department of Mathematics and Statistics\\ University of Maryland, Baltimore County, Baltimore, MD 21250, USA\\
}
  \maketitle
} \fi

\if1\blind
{
  \bigskip
  \bigskip
  \bigskip
  \begin{center}
    {\LARGE\bf Title}
\end{center}
  \medskip
} \fi
\begin{abstract}
In a classical chess round-robin tournament, each of $n$ players wins, draws, or loses a game against each of the other $n-1$ players. A win rewards a player with 1 points, a draw with 1/2 point, and a loss with 0 points.
We are interested in the distribution of the scores associated with ranks of $n$ players
after ${\displaystyle {n \choose 2}}$ games, i.e. the distribution of the maximal score, second maximum, and so on.
The exact distribution for a general $n$ seems impossible to obtain; we obtain a limit distribution.

\end{abstract}

\noindent%
{\it Keywords: Complete graph, extremes, negative correlation, Poisson approximation, total variation distance}

\noindent%
{\it MSC2020: 62G32; 05C20}
%\hfill {\tiny technometrics tex template (do not remove)}

\spacingset{1.42} % DON'T change the spacing!
\section{Introduction}
In a classical chess round-robin tournament, each of $n$ players wins, draws, or loses a game against each of the other $n-1$ players. A win rewards a player with 1 points, a draw with 1/2 point, and a loss with 0 points.
Denoting by $X_{ij}$ the score of the player $i$ after the game with the player $j, j\neq i$,
in this article, we consider the following model:
\medskip

\noindent
{\it {\bf Model M}}:\\
For $i\neq j$, $X_{ij}+X_{ji}=1,\,\,X_{ij}\in \left\{0, 1/2, 1\right\}$;
we assume that all players are equally strong, i.e. $P\left(X_{ij}=1\right)=P\left(X_{ji}=1\right)$, and that the probability of a draw is the same for all games, denoted by $p=P\left(X_{ij}=1/2\right)$.
We also assume that all  ${\displaystyle {n \choose 2}}$ pairs of scores $\left(X_{12}, X_{21}\right),\ldots,\left(X_{1n}, X_{n1}\right),\ldots,$ $\left(X_{n-1,n}, X_{n,n-1}\right)$ are independent.

Let $s_i=\sum_{j=1, j\neq i}^{n}{X_{ij}}$ be a score of the player $i$ $(i=1,\ldots,n)$ after playing with $n-1$ opponents.
We use a standard notation and denote by $s_{(1)}\leq s_{(2)}\leq \ldots \leq s_{(n)}$ the order statistics of the random variables
$s_1,s_2,\ldots,s_n$, and further denote normalized scores (zero expectation and unit variance) by $s_1^{*}, s_2^{*},\ldots,s_n^{*}$
with the corresponding order statistics $s_{(1)}^{*}\leq s_{(2)}^{*}\leq \cdots \leq s_{(n)}^{*}$.

For the case where there are no draws, i.e. $X_{ij}\in \left\{0,1\right\}, X_{ij}+X_{ji}=1, p_{ij}=P(X_{ij}=1)=\frac{1}{2}$, \cite{H1963}
proved  that $$s_{(n)}^{*}-\sqrt{2\log(n-1)}\rightarrow 0$$ in probability as $n\rightarrow \infty$ (see also \cite{Moon2013}),
where $\log(x)$ is the logarithm of $x$, to base $e$.
The main step in his proof was establishing the following inequality (Lemma 1 in \cite{H1963}): $$P\left(s_1<k_1,\ldots,s_m<k_m\right)\leq P\left(s_1<k_1\right)\cdots P\left(s_m<k_m\right)$$ for any probability matrix $(p_{ij})$ and any numbers $(k_1,\ldots,k_m)$, $m\leq n$.

\cite{R2021} studied a binomial
tournament model ($X_{ij}\sim Bin (n_{ij}, p_{ij})$), and proved that Huber's type lemma holds for that model. He established bounds for
$P(s_i > max_{j\neq i} s_j)$ and for the number of wins for the winning team using the stochastic ordering property, which required the knowledge
of certain negative dependence structures of the scores.

%\cite{R2021} considered a tournament model where $X_{ij}\sim Bin (n_{ij}, p_{ij})$,
%proved that Huber's type lemma holds for that model,
%and identified some bounds on the distribution of the number of wins for the winning team.

\cite{MM2021} extended Huber's lemma to a large class of discrete distributions of $X_{ij}$ and
showed that for generalizations of round-robin tournaments, this extension implies convergence in probability of the normalized maximal score.
Model M is a particular case of such generalizations.

In this work, we are interested in the marginal distribution of the scores associated with the ranks of $n$ players
after ${\displaystyle {n \choose 2}}$ games under Model M, where rank $1$ is the winner's rank, rank $2$ is the second best, and so on. This means that we are interested in finding the marginal distribution of $s_{(i)}$.
The exact distribution for a general $n$ seems impossible to obtain; we obtain a limit distribution,
and demonstrate it with the three best scores in Model M.
Recently \cite{MR2022} proved that $s_1,\ldots,s_n$ are negatively associated (see \cite{JP1983} for the definition).
It simplifies the proof of the main result and allows all values of $p$ in the interval $[0,1)$ to be considered.

\section{Main Result}
Under Model M, we have the following properties of the scores $s_1, s_2\ldots,s_n$ that satisfy $s_1+s_2+\cdots+s_n=n(n-1)/2$:
\begin{enumerate}
\item[(a)]
$E_n=E(s_1)=(n-1)/2$,\,\,\,\, $\sigma_n=\sigma(s_1)=\sqrt{(n-1)(1-p)/4}$,
\item[(b)]
$\rho_n=corr(s_1,s_2)=-1/(n-1)$,
\item[(c)]
The random variables $s_1, s_2,\ldots,s_n$ are exchangeable for the fixed $n$.
\end{enumerate}

The normalized scores $s^{*}_1, s^{*}_2,\ldots, s^{*}_n$ are exchangeable random variables for the fixed $n$, i.e., n-exchangeable or finite exchangeable. Their distribution depends on $n$, and their correlation is a function of $n$.
Therefore, if they are a segment of the infinite sequence $s^{*}_1, s^{*}_2,\dots$, then they are not exchangeable, i.e., not infinite exchangeable.

Let ${\displaystyle I_{j}^{(n)}=I(s_{j}^{*}>x_n(t))}$,
where we choose $x_n(t)=a_n t+b_n$, where
\begin{equation}
\label{eq:ab}
a_n=(2 \log n)^{-\frac{1}{2}},\,\,\,\,\, b_n=(2\log n)^{\frac{1}{2}}-\frac{1}{2}(2 \log n)^{-\frac{1}{2}}\left(\log\log n+\log 4\pi\right).
\end{equation}
Set ${\displaystyle W_n= I_{1}^{(n)}+ I_{2}^{(n)}+\cdots+ I_{n}^{(n)}}$.

We prove the following result.
\begin{thm}
\label{eq:Th}
For ${\displaystyle p \in [0, 1)}$, a fixed value of $k$, and a fixed real $t$, $$\lim_{n\rightarrow \infty}P(W_n=k)=e^{-\lambda(t)}\frac{\lambda(t)^k}{k!},\,\,\,\lambda(t)= e^{-t}.$$
\end{thm}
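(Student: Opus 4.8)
The plan is to establish the Poisson limit for $S_n = \sum_{j=1}^n I_j^{(n)}$ via the Chen--Stein method, exploiting the exchangeability and the negative correlation structure recorded in properties (a)--(c). Let me think about what needs to be shown.

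We have $S_n = \sum_{j=1}^n I_j^{(n)}$ where $I_j^{(n)} = I(s_j^* > x_n(t))$, with $x_n(t) = a_n t + b_n$ the classical normal-extreme normalizing sequence. The key facts: the $s_j^*$ are exchangeable (so all $I_j^{(n)}$ have the same distribution), and $\mathrm{corr}(s_j, s_k) = -1/(n-1) \to 0$.

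Let me think about the first moment. We need $\mathbb{E}[S_n] = n \cdot P(s_1^* > x_n(t)) \to \lambda(t) = e^{-t}$. This requires a normal approximation to the tail of $s_1^*$: since $s_1 = \sum_{j \neq 1} X_{1j}$ is a sum of $n-1$ i.i.d. bounded random variables, a CLT with a uniform (Berry--Esseen type, or better, a moderate-deviation/local-limit) bound gives $P(s_1^* > x_n(t)) \approx 1 - \Phi(x_n(t)) \approx \frac{1}{n} e^{-t}$ by the standard computation $n(1-\Phi(a_n t + b_n)) \to e^{-t}$. But the error in the normal approximation needs to be $o(1/n)$ uniformly over the relevant range $x_n(t) \asymp \sqrt{2\log n}$ — this is a moderate deviations regime, and the standard Berry--Esseen bound $O(1/\sqrt n)$ is nowhere near enough. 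This is where I expect the role of \cite{R2021}: since $X_{ij}$ takes finitely many values, $s_1$ is (after scaling) a lattice random variable, and one needs a sharp moderate-deviation estimate — something like Cramér's theorem or an Edgeworth-type refinement — to get $P(s_1^* > x_n(t)) = \frac{e^{-t}}{n}(1 + o(1))$. The restriction $p = 0$ or $p \in [1/3, 1)$ almost certainly enters here (and/or in controlling the lattice span / ensuring the moderate deviation rate function behaves correctly); I'd isolate this as a lemma.

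**The Chen--Stein bound.** By the Chen--Stein method for sums of (possibly dependent) indicators, letting $\lambda_n = \mathbb{E}[S_n]$ and using that the $I_j^{(n)}$ are exchangeable,
\begin{equation*}
d_{TV}\big(\mathcal{L}(S_n),\, \mathrm{Poisson}(\lambda_n)\big) \le b_1 + b_2 + b_3,
\end{equation*}
where, with a neighborhood of dependence chosen as the full index set (or using the exchangeable-pairs / size-biasing form),
\begin{equation*}
b_1 \asymp n^2\, p_1^2, \qquad b_2 \asymp n^2\, p_{12}, \qquad b_3 = 0,
\end{equation*}
with $p_1 = P(I_1^{(n)} = 1)$ and $p_{12} = P(I_1^{(n)} = 1, I_2^{(n)} = 1)$. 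Here $b_3 = 0$ because one can take the dependency structure so that there is no "local" correction term, or handle it via the negative-association argument below. Since $p_1 \sim e^{-t}/n$, we get $b_1 = O(1/n) \to 0$ immediately. The crux is $b_2$: we must show $n^2 p_{12} \to 0$, i.e. $P(s_1^* > x_n, s_2^* > x_n) = o(1/n^2)$.

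**Controlling the pairwise term — the main obstacle.** This is where I expect the real work, and where Huber's inequality (as extended in \cite{MM2021}) and the results of \cite{R2021} are decisive. The pair $(s_1, s_2)$ is negatively correlated, and more strongly, by the Huber-type lemma we have the negative-dependence inequality $P(s_1 > k_1, s_2 > k_2) \le P(s_1 > k_1) P(s_2 > k_2)$ (applied to the complementary events, possibly after re-expressing scores so the inequality in the excerpt applies — note Huber's lemma is stated for lower tails $P(s_i < k_i)$, so I would either use the symmetry $s_i \leftrightarrow (n-1) - s_i$ or invoke the version in \cite{R2021} directly for the Binomial-type model that Model M embeds into). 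Granting this, $P(s_1^* > x_n, s_2^* > x_n) \le p_1^2 \sim e^{-2t}/n^2$, which gives $b_2 = O(1)$ — not quite enough! So I'd need the sharper statement that conditioning on $s_1$ being large actually makes $s_2$ large less likely than unconditionally by a factor tending to $0$; equivalently, one needs $P(s_2^* > x_n \mid s_1^* > x_n) = o(P(s_2^* > x_n))$, i.e. $p_{12} = o(1/n^2)$. This should follow by a two-step argument: condition on the outcome of the single game between players $1$ and $2$ and on $s_1$; then $s_2$ is a sum of $n-2$ independent terms plus a bounded constant, and one applies the same moderate-deviation tail bound as in the first-moment step, now noting that requiring \emph{both} scores near the maximum forces a large-deviation event whose probability is $o(1/n^2)$ — intuitively because $s_1 + s_2 \le$ (its mean plus fluctuations) and the near-independence makes the joint upper tail genuinely smaller. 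Quantitatively, I would bound $P(s_1 > b, s_2 > b)$ by summing over the value of $s_1$ and using the Huber/\cite{R2021} inequality together with the sharp one-variable tail estimate; the product of two $e^{-t}/n$ factors is not enough, so the gain must come from the fact that the relevant threshold $b_n \sim \sqrt{2 \log n}$ sits where even a tiny positive correlation penalty compounds. I suspect \cite{R2021}'s explicit bounds on the winner's score distribution are used precisely to make this $o(1/n^2)$ rate rigorous.

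**Assembling the conclusion.** Once $b_1 + b_2 \to 0$, Chen--Stein gives $d_{TV}(\mathcal{L}(S_n), \mathrm{Poisson}(\lambda_n)) \to 0$. Combined with $\lambda_n \to \lambda(t) = e^{-t}$ (from the first-moment step) and continuity of the Poisson family in its parameter, we conclude $d_{TV}(\mathcal{L}(S_n), \mathrm{Poisson}(e^{-t})) \to 0$, hence $P(S_n = k) \to e^{-\lambda(t)} \lambda(t)^k / k!$ for each fixed $k$, which is the claim. The steps in order: (1) sharp one-dimensional moderate-deviation tail bound for $s_1^*$, yielding $n p_1 \to e^{-t}$ — this is where the hypothesis on $p$ is used; (2) the Huber-type negative-dependence inequality plus a conditioning argument to get $n^2 p_{12} \to 0$ — the main obstacle; (3) Chen--Stein assembly and passage from $\mathrm{Poisson}(\lambda_n)$ to $\mathrm{Poisson}(e^{-t})$.
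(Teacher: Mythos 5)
There is a genuine gap in the key step. You correctly observe that the Huber-type inequality only gives $n^2 p_{12} = O(1)$ and that this is ``not quite enough'' for the standard Chen--Stein term $b_2 \asymp n^2 p_{12}$; but your proposed fix --- proving $p_{12} = o(1/n^2)$, i.e.\ $P(s_2^* > x_n \mid s_1^* > x_n) = o(P(s_2^* > x_n))$ --- is a false statement. Conditioning on the single shared game $X_{12}$ makes $s_1$ and $s_2$ exactly independent, and the shift this conditioning induces in each score is only $O(1)$, which washes out at the threshold $x_n(t) \asymp \sqrt{2\log n}$. Hence $p_{12} \sim p_1 p_2 \sim e^{-2t}/n^2$: the two exceedance events are asymptotically \emph{independent}, not asymptotically repelling with a vanishing factor, and your heuristic about $s_1+s_2$ being constrained does not produce any extra decay at this scale. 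Your route, as written, cannot close.

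The paper's resolution is to use a different form of the Stein--Chen bound: for \emph{negatively related} indicators (Barbour--Holst--Janson, Corollary 2.C.2) one has $d_{TV} \le \frac{1-e^{-\lambda_n}}{\lambda_n}\bigl(\lambda_n - \mathrm{Var}(W_n)\bigr) = \frac{1-e^{-\lambda_n}}{\lambda_n}\bigl(\sum_i (\pi_i^{(n)})^2 - \sum_{i\neq j}\mathrm{Cov}(I_i^{(n)},I_j^{(n)})\bigr)$, so only the \emph{covariance} --- not the joint probability --- must be $o(1/n^2)$ per pair. That follows precisely from the asymptotic factorization $n^2 p_{12} \to e^{-2t} = \lim (n p_1)^2$, proved by the conditioning-on-$X_{12}$ argument you already sketched for the wrong purpose. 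Note also that the hypothesis $p=0$ or $p\in[1/3,1)$ is used not in the moderate-deviation tail estimate (as you guessed) but to verify that $2X_{ij}$ is log-concave, which is what lets the paper invoke Ross (2021) to establish the negative relation needed for that form of the bound. Your step (1), the sharp tail estimate $np_1 \to e^{-t}$ via a moderate-deviation refinement of the CLT, matches the paper (Feller, Vol.~II, Thm.~2/3 on pp.~552--553, plus Cram\'er's computation) and is fine.
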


\begin{proof} {(Theorem \ref{eq:Th})}
The result follows from Assertions presented below.
Set $$\pi_{i}^{(n)}=P(I_{i}^{(n)}=1),\,\,\,W_{n}=\sum_{i=1}^{n}I_{i}^{(n)},\,\,\,\lambda_n=E(W_n)=\sum_{i=1}^{n}\pi_{i}^{(n)}.$$

\noindent
{\it \bf Assertion 1}.

\begin{equation}
\tag{A1}\label{eq:A1}
d_{TV}\left({L}(W_n), Poi(\lambda_n)\right)\leq
\frac{1-e^{\lambda_n}}{\lambda_n}\left(\lambda_n-Var(W_n)\right)=\frac{1-e^{\lambda_n}}{\lambda_n}\left(\sum_{i=1}^{n}\left(\pi_{i}^{(n)}\right)^2-
\sum_{i\neq j}Cov\left(I_{i}^{(n)}, I_{j}^{(n)}\right)\right),
\end{equation}
where ${\displaystyle d_{TV}\left({L}(W_n), Poi(\lambda_n)\right)}$ is the total variation distance between
distributions of $W_n$ and Poisson distribution with mean $\lambda_n$.
\bigskip

\noindent
{\it \bf Assertion 2}.
\begin{equation}
\tag{A2}\label{eq:A2}
{\displaystyle  \pi_{1}^{(n)}=P\left(s_{1}^{*}>x_n(t)\right)\sim 1-\Phi\left(x_n(t)\right)},
\end{equation}
where ${\displaystyle c_n\sim k_n}$ means ${\displaystyle \lim_{n\rightarrow \infty} c_n/k_n=1}$.
\bigskip

\noindent
{\it \bf Assertion 3}.
\begin{equation}
\tag{A3}\label{eq:A3}
{\displaystyle  \lim_{n\rightarrow \infty}n \pi_{1}^{(n)}=\lim_{n\rightarrow \infty}n P(s_{1}^{*}>x_n(t))=\lambda(t)=e^{-t}}.
\end{equation}
\bigskip

\noindent
{\it \bf Assertion 4}.
\begin{equation}
\tag{A4}\label{eq:A4}
{\displaystyle \lim_{n\rightarrow \infty}n^2(P(s_{1}^{*}>x_n(t), s_{2}^{*}>x_n(t) ))=\lambda(t)^2=e^{-2t}.}
\end{equation}

In our case, since $s_1^{*},\ldots,s_{n}^{*}$ are identically distributed,
${\displaystyle  \sum_{i=1}^{n}\left(\pi_{i}^{(n)}\right)^2=n P(s^{*}_{1}>x_n)P(s^{*}_{1}>x_n),}$
and ${\displaystyle \sum_{i\neq j}Cov\left(I_{i}^{(n)}, I_{j}^{(n)}\right)=n(n-1)\left[P\left(s_{1}^{*}>x_n(t), s_{2}^{*}>x_n(t)\right)-P\left(s_{1}^{*}>x_n(t)\right)P\left(s_{2}^{*}>x_n(t)\right)\right]
}.$
Hence, from \eqref{eq:A2} and \eqref{eq:A3} it follows that

\begin{equation}
\tag{F1}\label{eq:F1}
{\displaystyle \lim_{n\rightarrow \infty}\sum_{i=1}^{n}\left(\pi_{i}^{(n)}\right)^2=0}.
\end{equation}

and from \eqref{eq:A3} and \eqref{eq:A4} it follows that

\begin{equation}
\tag{F2}\label{eq:F2}
{\displaystyle \lim_{n\rightarrow \infty}\sum_{i\neq j}Cov\left(I_{i}^{(n)}, I_{j}^{(n)}\right)=0}.
\end{equation}

Then, from \eqref{eq:F1} and \eqref{eq:F2} it follows that
${\displaystyle \lim_{n\rightarrow \infty} d_{TV}\left({L}(W_n), Poi(\lambda_n)\right)=0}$,
and this completes the proof of Theorem \ref{eq:Th}.
\end{proof}

\begin{proof} ({\it Assertion 1}).
\cite{MR2022} proved that $s_1,\ldots,s_n$ are negatively associated (see \cite{JP1983} for the definition).
For any $j=1,\ldots,n$, the indicator ${\displaystyle I_{j}^{(n)}}$ is an increasing function of $s_j$.
Hence, by Property 6 in \cite{JP1983}, the indicators ${\displaystyle I_{1}^{(n)},\ldots,I_{n}^{(n)}}$ are negatively associated.
Combining Theorem 2.I \citep{BHJ1992} and the Corollary 2.C.2 \citep{BHJ1992}, we obtain \eqref{eq:A1}.

\end{proof}

\begin{proof} ({\it Assertion 2}).
Follows from \cite{F1971}(p. 552-553, Theorem 2 or 3).
\end{proof}

\begin{proof} ({\it Assertion 3}).
Follows from {\it Assertion 2} combined with \cite{C1946} result on page 374 of his book.
\end{proof}

\begin{proof} ({\it Assertion 4}).
Recall that ${\displaystyle s_1=X_{12}+X_{13}+\cdots+X_{1n}}$ and ${\displaystyle s_2=X_{21}+X_{23}+\cdots+X_{2n}}$.
Hence, condition on the event ${\displaystyle X_{12}=k, k\in \left\{0, 1/2, 1\right\}}$, $s_1$ and $s_2$ are independent.
Let ${\displaystyle s_{1^{'}}=X_{13}+\cdots+X_{1n}, s_{2^{'}}=X_{23}+\cdots+X_{2n}}$ and
denote by  ${\displaystyle s_{1^{'}}^{*}, s_{2^{'}}^{*}}$ the corresponding normalized scores (zero expectation and unit variance).
We have,
\begin{align}
&
P(s_{1}^{*}>x_n(t), s_{2}^{*}>x_n(t)\,\big|\,X_{12}=k )=P(s_{1}^{*}>x_n(t)\,\big|\,X_{12}=k )
P(s_{2}^{*}>x_n(t)\,\big|\,X_{12}=k )\nonumber\\
&
=
P\left(s_{1^{'}}^{*}>x_{n-1}(t)\frac{x_{n}(t)}{x_{n-1}(t)}\sqrt{\frac{n-1}{n-2}}-\frac{\sqrt{2}(k-1/2)}{\sqrt{n-2}} \right)\nonumber\\
&
P\left(s_{2^{'}}^{*}>x_{n-1}(t)\frac{x_{n}(t)}{x_{n-1}(t)}\sqrt{\frac{n-1}{n-2}}-\frac{\sqrt{2}((1-k)-1/2)}{\sqrt{n-2}} \right)
\nonumber \\
\tag{F3}\label{eq:F3}
&
\sim
P\left(s_{1^{'}}^{*}>x_{n-1}(t)\right)
P\left(s_{2^{'}}^{*}>x_{n-1}(t)\right).
\end{align}
Combining \eqref{eq:F3} with the formula of total probability we obtain
\begin{equation*}
P(s_{1}^{*}>x_n(t), s_{2}^{*}>x_n(t))\sim P\left(s_{1^{'}}^{*}>x_{n-1}(t)\right)
P\left(s_{2^{'}}^{*}>x_{n-1}(t)\right),
\end{equation*}
and combining it with {\it Assertion 3} we obtain \eqref{eq:A4}.
\end{proof}

\begin{remark}
It remains an open problem if Theorem \ref{eq:Th} holds also for ${\displaystyle p \in (0, {1}/{3})}$.
\end{remark}

\section{Asymptotic distribution of the order statistics of the normalized scores}

An immediate consequence of Theorem \ref{eq:Th} is given below and describes the asymptotic distribution of the ordered normalized scores.

\begin{result}
\label{eq:mainR}
Suppose $p$ is fixed, ${\displaystyle p \in [0, 1)}$. Then, for a fixed $j$ and a fixed real number $t$,
$${\displaystyle
\lim_{n\rightarrow \infty}P\left(s^{*}_{(n-j)}\leq a_{n}t+b_{n}\right)=G(t)\sum_{k=0}^{j}e^{-\frac{tk}{k!}}},$$
where $a_n$ and $b_n$ are defined in \eqref{eq:ab} and $G(t)=e^{-e^{-t}}$ ("Gumbel" distribution function).
\end{result}

\begin{proof}
Result \ref{eq:mainR} follows from Theorem \ref{eq:Th},
since \\${\displaystyle P\left(s^{*}_{(n-j)}\leq x_n(t)\right)=P\left(W_n\leq j\right)}$,
and therefore
\begin{align*}
%\tag{T}\label{eq:T}
{\displaystyle \lim_{n\rightarrow \infty} P\left(s^{*}_{(n-j)}\leq x_n(t)\right)=
\lim_{n\rightarrow \infty}P\left(W_n\leq j\right)}=e^{-e^{-t}}\sum_{k=0}^j \frac{e^{-tk}}{k!}.
\end{align*}
\end{proof}

We demonstrate our results with the three best scores in Model M.

\subsection{Maximal Score}
For ${\displaystyle p \in [0, 1)}$ , we obtain  from Result \ref{eq:mainR} the following corollary.

\begin{cor}
\label{eq:cor}
\begin{align*}
&
E\left(s_{(n)}\right)\sim  \frac{n-1}{2}+\sqrt{\frac{(n-1)\log(n)(1-p)}{2}}
\\
&
+\sqrt{\frac{(n-1)(1-p)}{2\ln(n)}}\left\{\frac{\gamma}{2}-\frac{1}{4}\left(\log\log(n)+\log(4\pi)\right)\right\} \equiv \hat{E}_{(n)},             \\
&
\sigma\left(s_{(n)}\right)\sim \frac{\pi}{4\sqrt{3}}\sqrt{\frac{(n-1)(1-p)}{2\log(n)}}\equiv \hat{\sigma}_{(n)},
\end{align*}
\end{cor}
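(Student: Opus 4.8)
The plan is to read off the first two moments of $s_{(n)}$ from the Gumbel limit already supplied by Result~\ref{eq:mainR}. Taking $j=0$ there gives, with $x_n(t)=a_nt+b_n$,
\[
\lim_{n\to\infty}P\!\left(\frac{s_{(n)}^{*}-b_n}{a_n}\le t\right)=G(t)=e^{-e^{-t}},\qquad t\in\mathbb{R},
\]
so $(s_{(n)}^{*}-b_n)/a_n$ converges in distribution to a standard Gumbel random variable $\Lambda$, for which $E\Lambda=\gamma$ (Euler--Mascheroni constant) and $\operatorname{Var}\Lambda=\pi^{2}/6$. If this convergence can be upgraded to convergence of the first two moments---which I would do by establishing uniform integrability---then $E(s_{(n)}^{*})=b_n+\gamma\,a_n+o(a_n)$ and $\operatorname{Var}(s_{(n)}^{*})=\tfrac{\pi^{2}}{6}\,a_n^{2}(1+o(1))$. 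Since $s_{(n)}=E_n+\sigma_n s_{(n)}^{*}$ with $E_n=(n-1)/2$ and $\sigma_n=\tfrac12\sqrt{(n-1)(1-p)}$, this gives $E(s_{(n)})=E_n+\sigma_nE(s_{(n)}^{*})$ and $\sigma(s_{(n)})=\sigma_n\,\sigma(s_{(n)}^{*})$; substituting the formulas \eqref{eq:ab} for $a_n$ and $b_n$ and collecting terms yields the two displayed asymptotics, the $\tfrac\gamma2$ term of $\hat E_{(n)}$ arising from $\sigma_n a_n\gamma$ and the constant of $\hat\sigma_{(n)}$ from $\sigma_n a_n\sqrt{\operatorname{Var}\Lambda}=\sigma_n a_n\,\pi/\sqrt6$.

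The substance of the argument is therefore the uniform integrability of $\{(s_{(n)}^{*}-b_n)/a_n\}_n$ and of its square. Two deterministic facts help: $\sum_i s_i^{*}=0$ (since $\sum_i s_i=\binom{n}{2}$), so $s_{(n)}^{*}\ge 0$; and $s_{(n)}\le n-1$, so $s_{(n)}^{*}\le\sqrt{(n-1)/(1-p)}$. For the \emph{upper} tail I would combine the union bound $P(s_{(n)}^{*}>y)\le n\,P(s_1^{*}>y)$ with a \emph{uniform} Cram\'er-type moderate-deviation estimate for the standardized bounded i.i.d.\ sum $s_1^{*}$ (the classical Cram\'er--Feller tail asymptotics underlying \eqref{eq:A2}--\eqref{eq:A3}, valid uniformly for $y=O(\sqrt{\log n})=o(n^{1/6})$), obtaining $P(s_1^{*}>y)\le 2(1-\Phi(y))$ for all large $n$; a Mills-ratio estimate with $y=a_nz+b_n$ then gives $n\,P(s_1^{*}>a_nz+b_n)\le C\,e^{-z/2}$ uniformly in $n$ and $z\ge 0$ in that range, while for the far tail $z\gg n^{1/6}$ a crude Hoeffding- or Bernstein-type bound on $s_1^{*}$ together with the deterministic bound $s_{(n)}^{*}\le\sqrt{(n-1)/(1-p)}$ makes the contribution negligible. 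For the \emph{lower} tail I would invoke Huber's inequality in its form for Model~M \citep{H1963,MM2021}, available precisely because $p=0$ or $p\in[1/3,1)$: $P(s_{(n)}^{*}<w)=P\bigl(\bigcap_i\{s_i^{*}<w\}\bigr)\le\prod_i P(s_i^{*}<w)=P(s_1^{*}<w)^{n}\le\exp\bigl(-c\,n\,(1-\Phi(w))\bigr)$, which for $w=b_n-z a_n$ is doubly-exponentially small in $z$ uniformly in $n$; this drives $E\bigl[((s_{(n)}^{*}-b_n)/a_n)^{-}\mathbf 1(\,\cdot>K)\bigr]\to 0$ uniformly as $K\to\infty$.

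Combining these estimates upgrades the convergence in distribution to $L^{2}$-convergence of $(s_{(n)}^{*}-b_n)/a_n$ to $\Lambda$, hence $E(s_{(n)}^{*})=b_n+\gamma a_n+o(a_n)$ and $\operatorname{Var}(s_{(n)}^{*})=\tfrac{\pi^2}{6}a_n^{2}(1+o(1))$; the asymptotics for $\hat E_{(n)}$ and $\hat\sigma_{(n)}$ then follow from the linear relation $s_{(n)}=E_n+\sigma_n s_{(n)}^{*}$ and a routine simplification of $a_n,b_n,E_n,\sigma_n$. I expect the one genuine obstacle to be the uniform integrability---specifically, getting a tail bound for $s_1^{*}$ that keeps the $1/y$ normal-tail correction uniformly in $n$, since a plain Bernstein bound loses a factor $\sqrt{\log n}$ and is by itself too weak for the union bound over the $n$ players; once that is in hand the rest is bookkeeping.
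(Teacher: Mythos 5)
Your calculation of the constants is right and your overall route coincides with the paper's: read the Gumbel moments $E\Lambda=\gamma$, $\mathrm{Var}\,\Lambda=\pi^{2}/6$ off the limit law in Result~\ref{eq:mainR} and push them through the affine map $s_{(n)}=E_n+\sigma_n s^{*}_{(n)}$ with $a_n,b_n$ from \eqref{eq:ab}. The difference is in what each of you treats as the content of the proof. The paper derives the Gumbel moments from the R\'enyi spacings representation of exponential order statistics (so that $E(Y_{(n)})=\sum_{j\le n}1/j\to\log n+\gamma$ and $\mathrm{Var}(Y_{(n)})\to\pi^{2}/6$) and then simply writes ``combining this with Result~\ref{eq:mainR}'' to pass from the distributional limit to $E(s^{*}_{(n)})\sim b_n+\gamma a_n$ and $\sigma(s^{*}_{(n)})\sim a_n\pi/\sqrt{6}$; no uniform integrability is established, so the interchange of limit and expectation is asserted rather than proved (and is then supported numerically in Table~1). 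You correctly identify that this interchange is the only nontrivial step and sketch a plausible way to close it: the union bound plus a Cram\'er--Feller moderate-deviation tail that is uniform in the range $y=O(\sqrt{\log n})$ for the upper tail, and the Huber-type negative-dependence inequality \citep{H1963,MM2021} for the doubly exponential lower tail, together with the deterministic bounds $0\le s^{*}_{(n)}\le\sqrt{(n-1)/(1-p)}$. That is more than the paper does, and it is the right set of tools; to make it a complete proof you would still have to carry out the uniform tail estimate you flag as the ``one genuine obstacle.'' One further point in your favor: your derivation gives the coefficient $\sigma_n a_n\pi/\sqrt{6}=\tfrac{\pi}{2\sqrt{6}}\sqrt{(n-1)(1-p)/(2\log n)}$, which agrees with the numerical values $\hat{\sigma}_{(n)}$ reported in Table~1, whereas the constant $\tfrac{\pi}{4\sqrt{3}}$ printed in Corollary~\ref{eq:cor} is off by a factor of $\sqrt{2}$ and appears to be a typo.
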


where $\gamma=0.5772156649\ldots$ is the Euler constant.

\begin{proof}
The moments under the distribution function $G$ can be obtained based on the following consideration.
If $Y_1,\ldots,Y_n$ are independent $exp(1)$ random variables, then straightforward calculation shows (see for example \cite{GS2020}):
\begin{equation}
\label{eq:e}
{\displaystyle \lim_{n\rightarrow\infty}P\left(Y_{(n)}-\log(n)\leq t\right)}=G(t),
\end{equation}
and
for r=$0,1,2,\ldots,n$, $$(n+1-r)\left(Y_{(r)}-Y_{(r-1)}\right)$$ are independent
exponential random variables with rate parameter $1$, where $Y_{(0)}$ is defined as zero.
Since
\begin{equation}
\label{eq:order}
Y_{(k)}=Y_{(1)}+\left(Y_{(2)}-Y_{(1)}\right)+\cdots+\left(Y_{(k)}-Y_{(k-1)}\right),
\end{equation}
we obtain
that $$E(Y_{(n)})=\sum_{j=1}^{n}\frac{1}{j},\,\,\,\,\,\,Var(Y_{(n)})=\sum_{j=1}^{n}\frac{1}{j^2}.$$
From ${\displaystyle \lim_{n\rightarrow \infty}\left\{\sum_{j=1}^{n}\frac{1}{j}-\log n\right\}=\gamma}$, ${\displaystyle \lim_{n\rightarrow \infty}\sum_{j=1}^{n}\frac{1}{j^2}=\frac{\pi^2}{6}}$ (see for example \cite{CR1996}), and \eqref{eq:e}, we obtain the expectation and variance under the distribution function $G$ as
${\displaystyle E_{G}=\gamma,\,\,\, Var_{G}=\frac{\pi^2}{6}}$.
Combining this with Result \ref{eq:mainR}, we have
\begin{equation}
E\left(s^{*}_{(n)}\right)\sim \gamma b_n+a_n,\,\,\, \sigma\left(s^{*}_{(n)}\right)\sim \sqrt{\frac{\pi^2}{6}} b_n.
\end{equation}
Then, upon substituting  $s^{*}_{(n)}=(s_{(n)}-E_n)/\sigma_n$, Corollary \ref{eq:cor} follows.
\end{proof}

In the following table, we compare $E\left(s_{(n)}\right)$ with $\hat{E}_{(n)}$ and $\sigma\left(s_{(n)}\right)$ with $\hat{\sigma}_{(n)}$ in this manner: We fix $p=2/3$ and for n=10, 20, 50, 100, 1000, and 10000 we evaluate
$E\left(s_{(n)}\right)$ and $\sigma\left(s_{(n)}\right)$ using Monte-Carlo (MC) simulation.
Values  of $\hat{E}_{(n)}$ and $\hat{\sigma}_{(n)}$ obtained based on Corollary \ref{eq:cor}.

\begin{table}[H]
\caption{
The number of Monte-Carlo repetitions is 100,000 for n=10, 20, 50, 100; 10,000 for n=1000; and 500 for n=10,000. }
\label{t:1}
%\scriptsize
%\normalsize
\small
\begin{center}
\begin{tabular}{lllllll}
  n & $E\left(s_{(n)}\right)$ & $\hat{E}_{(n)}$ & $|\hat{E}_{(n)}/E\left(s_{(n)}\right)-1|*100\%$ &$\sigma\left(s_{(n)}\right)$&$\hat{\sigma}_{(n)}$& $|\hat{\sigma}_{(n)}/\sigma\left(s_{(n)}\right)-1|*100\%$  \\
  \hline
  10 & 5.833 & 5.912      &  1.360&  0.469&  0.518 & 10.454\\
  20 & 11.89 &  11.944    &  0.456&  0.627&  0.659 & 5.189\\
  50 & 29.08 &  29.162    &  0.283&  0.912&  0.927 & 1.563 \\
  100 & 56.73 & 56.843    &  0.199&  1.219&  1.214 & 0.426\\
  1,000 & 529.12 & 529.352&  0.044&  3.259&  3.148 & 3.529\\
  10,000&  5110.23&5111.295&0.0212&  8.949&  8.626 & 3.742\\
\end{tabular}
%$$
\end{center}
  \end{table}

\subsection{Second and third largest scores}

For ${\displaystyle p \in [0, 1)}$ , we also obtain from Result \ref{eq:mainR} the following corollary.

\begin{cor}

\begin{align}
\label{eq:m}
&
E\left(s^{*}_{(n-1)}\right)\sim \gamma b_n+a_n-b_n,\,\,\,\,\sigma\left(s^{*}_{(n-1)}\right)\sim \sqrt{\left(\frac{\pi^2}{6}-1\right)}\,b_n,\\
&
E\left(s^{*}_{(n-2)}\right)\sim \gamma b_n+a_n-3/2b_n,\,\,\,\,\sigma\left(s^{*}_{(n-2)}\right)\sim \sqrt{\left(\frac{\pi^2}{6}-1.25\right)}\,b_n
\end{align}

\end{cor}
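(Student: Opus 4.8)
The plan is to mimic the proof of Corollary~\ref{eq:cor}: reduce the four asymptotics to the mean and the variance of the limit law furnished by Result~\ref{eq:mainR}, and then read off those two moments by specializing the exponential order-statistics identities already used for the maximal score.

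First I would pin down the limit law. For a fixed $j$, let $F_j$ denote the limiting distribution function in Result~\ref{eq:mainR}, so that $F_j(t)=G(t)\bigl(1+e^{-t}+\cdots+e^{-jt}/j!\bigr)=e^{-e^{-t}}\sum_{k=0}^{j}e^{-kt}/k!$. By the analogous Poisson computation (the number of the $Y_i$ exceeding $\log n+t$ is asymptotically Poisson with mean $e^{-t}$), the function $F_j$ is also the limiting distribution function of $Y_{(n-j)}-\log n$, where $Y_1,\dots,Y_n$ are i.i.d.\ $\exp(1)$ variables with order statistics $Y_{(1)}\le\dots\le Y_{(n)}$; indeed $P\bigl(Y_{(n-j)}-\log n\le t\bigr)\to\sum_{k=0}^{j}e^{-e^{-t}}e^{-kt}/k!=F_j(t)$. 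Hence, exactly as for $j=0$ in Corollary~\ref{eq:cor}, it suffices to produce the mean $\mu_j$ and the variance $v_j$ of a random variable with distribution function $F_j$, since then Result~\ref{eq:mainR} yields $E\bigl(s^{*}_{(n-j)}\bigr)\sim\mu_j b_n+a_n$ and $\sigma\bigl(s^{*}_{(n-j)}\bigr)\sim\sqrt{v_j}\,b_n$.

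Next I would evaluate $\mu_j$ and $v_j$ from the exponential order-statistics representation recalled in the proof of Corollary~\ref{eq:cor}. Since $(n+1-r)\bigl(Y_{(r)}-Y_{(r-1)}\bigr)$, $r=1,\dots,n$, are independent $\exp(1)$ variables and $Y_{(n-j)}=\sum_{r=1}^{n-j}\bigl(Y_{(r)}-Y_{(r-1)}\bigr)$, one gets $E\bigl(Y_{(n-j)}\bigr)=\sum_{m=j+1}^{n}1/m$ and $Var\bigl(Y_{(n-j)}\bigr)=\sum_{m=j+1}^{n}1/m^{2}$; subtracting $\log n$ and letting $n\to\infty$, using $\sum_{m=1}^{n}1/m-\log n\to\gamma$ and $\sum_{m=1}^{\infty}1/m^{2}=\pi^{2}/6$, gives $\mu_j=\gamma-\sum_{m=1}^{j}1/m$ and $v_j=\pi^{2}/6-\sum_{m=1}^{j}1/m^{2}$. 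Taking $j=1$ yields $\mu_1=\gamma-1$ and $v_1=\pi^{2}/6-1$; taking $j=2$ yields $\mu_2=\gamma-3/2$ and $v_2=\pi^{2}/6-5/4$. Substituting these into $E\bigl(s^{*}_{(n-j)}\bigr)\sim\mu_j b_n+a_n$ and $\sigma\bigl(s^{*}_{(n-j)}\bigr)\sim\sqrt{v_j}\,b_n$ produces the four displayed formulas.

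The step I expect to be the genuine obstacle is the passage from the distributional convergence of Result~\ref{eq:mainR} to convergence of the first two moments: weak convergence of the rescaled statistic $(s^{*}_{(n-j)}-a_n)/b_n$ to $F_j$ does not by itself give the stated asymptotics for its mean and standard deviation, so a uniform-integrability input is needed (the same point is implicit in Corollary~\ref{eq:cor}). I would supply it by a uniform third-moment bound. Each $s_i^{*}$ is the standardization of a sum of independent $[0,1]$-valued summands, so Hoeffding's inequality gives sub-Gaussian tails for $s_i^{*}$ uniform in $i$ and $n$; a union bound then controls the upper tail of $s^{*}_{(n-j)}$, while a Huber-type negative-dependence bound, valid here because $p=0$ or $p\in[1/3,1)$ (Proposition~1 and Corollary~2 \citep{R2021}), controls its lower tail. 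Together with the trivial estimate $P(\,\cdot\,)\le 1$ over bounded ranges, these give $\sup_n E\left|(s^{*}_{(n-j)}-a_n)/b_n\right|^{3}<\infty$, which upgrades the weak convergence to convergence of the mean and of the variance. The remainder is the routine harmonic-sum bookkeeping indicated above.
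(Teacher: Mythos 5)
Your proposal follows essentially the same route as the paper: identify the limit law of $s^{*}_{(n-j)}$ with that of $Y_{(n-j)}-\log n$ for i.i.d.\ $\exp(1)$ variables (the paper simply cites Theorem 2.2.2 of \cite{LLR1983}, where you rederive it via the Poisson count argument), and then compute the limiting mean $\gamma-\sum_{m=1}^{j}1/m$ and variance $\pi^{2}/6-\sum_{m=1}^{j}1/m^{2}$ from the R\'enyi representation $(n+1-r)\left(Y_{(r)}-Y_{(r-1)}\right)$, exactly as in the proof of Corollary \ref{eq:cor}. Your explicit uniform-integrability discussion fills in a step the paper leaves implicit but does not change the approach.
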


\begin{proof}
From Theorem 2.2.2 in \cite{LLR1983}, we obtain the following result:
if $Y_1,\ldots,Y_n$ are independent $exp(1)$ random variables, then for $j=1,2$
\begin{equation}
\label{eq:ee}
{\displaystyle \lim_{n\rightarrow\infty}P\left(Y_{(n-j)}-\log(n)\leq t\right)=G(t)\left(1+e^{-t}/1!+\cdots+e^{-jt}/j!\right)}.
\end{equation}
The rest of the proof is similar to the proof of Corollary 1.
\end{proof}

Substituting $s^{*}_{(j)}=(s_{(j)}-E_n)/\sigma_n$ for $j=n-1, n-2$, we obtain the values
$E(s_{(j)}), \sigma(s_{(j)})), \widehat{E}_{(j)}, \widehat{\sigma}_{(j)}$, which are
similar to the corresponding values obtained in Corollary 1 for the case $j=n$.
In the case where $p=2/3$, we provide numerical comparisons for the second and third largest scores in a similar manner as was done in Table 1.

\begin{table}[H]
\caption{
The number of Monte-Carlo repetitions is 100,000 for n=10, 20, 50, 100; 10,000 for n=1000; and 500 for n=10,000;
${\displaystyle r_{(j)}=|\widehat{E}(s_{(j)})}/E(s_{(j)})-1|*100\%,\,\,\,j=n-1, n-2.$ }
\label{t:1}
%\scriptsize
%\normalsize
\small
\begin{center}
\begin{tabular}{lllllllll}
  n
  & $E(s_{(n-1)})(\sigma(s_{(n-1)}))$
  & $\widehat{E}_{(n-1)}(\widehat{\sigma}_{(n-1)})$
   & $E(s_{(n-2)})(\sigma(s_{(n-2)}))$
  & $\widehat{E}_{(n-2)}(\widehat{\sigma}_{(n-2)})$
  &$r_{(n-1)}$
  &$r_{(n-2)}$\\
  \hline
  10 &5.400(0.338)      &5.509(0.324)    &5.093(0.273)    &5.307(0.254)    &2.009  &4.195\\
  20 &11.305(0.446)     &11.43(0.413)    &10.95(0.374)    &11.173(0.323)   &1.106  &2.037\\
  50 &28.277(0.649)     &28.44(0.580)    &27.816(0.541)   &28.079(0.454)   &0.576  & 0.946\\
  100 &55.695(0.858)    &55.896(0.760)   &55.113(0.712    &55.423(0.595)   &0.361  & 0.563\\
  1,000 &526.48(2.154)  &526.9(1.971)    &525.05(1.764)   &525.67(1.543)   &0.080  & 0.118\\
  10,000&5103.2(5.866)  &5104.6(5.401)   &5099.5(4.672)   &5101.2(4.227)   &0.027  &0.033\\
\end{tabular}
%$$
\end{center}
  \end{table}

\section*{Acknowledgement}
I thank Abram Kagan for describing a score issue in chess round-robin tournaments with draws.
I am grateful to Pavel Chigansky for pointing out my mistake in \cite{M2021a} (see also \cite{M2021b}).
I thank Sheldon Ross for referring to his recent paper. I also thank Yosi Rinott for the discussions and comments.
This research is supported in part by BSF grant 2020063.
{}

\end{document}